\newtheorem{theorem}{Theorem}
\newtheorem{definition}[theorem]{Definition}
\newtheorem{lemma}[theorem]{Lemma}
\newtheorem{problem}[theorem]{Problem}
\DeclareMathOperator{\Real}{Re}
\DeclareMathOperator{\Imaginary}{Im}
\renewcommand{\Re}{\Real}
\renewcommand{\Im}{\Imaginary}
\newcommand{\C}{{\mathbb C}}
\newcommand{\R}{{\mathbb R}}
\newcommand{\G}{{\mathcal G}}
\renewcommand{\H}{{\mathcal H}}
\newcommand{\U}{\mathbf U}
\newcommand{\eps}{\varepsilon}
\newcommand{\be}{\begin{equation}}
\newcommand{\ee}{\end{equation}}
\newcommand{\I}{\mathbf I}
\newcommand{\old}[1]{}
\title[Holomorphic quadratic differentials on graphs]{Holomorphic quadratic differentials on graphs and the chromatic polynomial}
\author{Richard Kenyon, Wai Yeung Lam}
\address{Richard Kenyon\\
	Mathematics Department \\Brown University\\ Providence\\ RI 02912}
\email{richard\_kenyon at brown.edu}
\address{Wai Yeung Lam\\
	Mathematics Department \\Brown University\\ Providence\\ RI 02912}
\email{lam at math.brown.edu}
\begin{document}
\begin{abstract}
We study ``holomorphic quadratic differentials" on graphs. We relate them to the reactive power in an LC circuit, 
and also to the chromatic polynomial of a graph.
Specifically, we show that the chromatic polynomial $\chi$ of a graph $\G$, at negative integer values, can be evaluated as the degree
of a certain rational mapping, arising from the defining equations for a holomorphic quadratic differential.  
This allows us to give an explicit integral expression for $\chi(-k)$.
\end{abstract}

\maketitle

\section{Introduction}
Let $\mathcal{G}=(V,E)$ be a graph together with a specified set $V_{b} \subset V$ of \emph{boundary vertices}. We let $V_{int}= V -V_{b}$ 
be the \emph{interior vertices}.  We assume that every edge connects to at least one interior vertex.

\begin{definition}
A \emph{holomorphic quadratic differential (HQD)} on a graph $\G=(V,E)$ is the data consisting of:
a function $q: E \to \mathbb{R}$ defined on unoriented edges, i.e. $q_{uv} = q_{vu}$, and 
a mapping $z:V \to \mathbb{C}$ to the complex plane, satisfying for every interior vertex $u$
\begin{align}
	\sum_v q_{uv} &=0 \label{qsum}\\
	\sum_v \frac{q_{uv}}{z_u -z_v} &=0.\label{zsum}
\end{align} 
\end{definition}

Holomorphic quadratic differentials on surface graphs arise in discrete differential geometry,
see \cite{Lam2015a} and Section \ref{bkgd} below. 
In some situations we drop condition (\ref{qsum}); see below.
Even though the definition involves both the function $q$ and realization $z$, we refer to $q$ as ``the holomorphic quadratic
differential", since in some applications the realization $z$ is given {\it a priori}. 
In this paper however we are mainly interested in the reversed problem, about existence and enumeration of realizations $z$
for a given $q$.

\begin{problem}\label{2}
Given a function $q:E\to\R$ satisfying (\ref{qsum}), and fixed boundary values 
$z:V_b\to\C$, can we find a realization $z:V\to\C$ satisfying (\ref{zsum})  
and how many such realizations are there?
\end{problem}

In Section \ref{Tuttepolysection} we show, under a genericity assumption on $q$,  that
the number of solutions to Problem \ref{2} satisfies a contraction-deletion relation.
In particular for integer $k\ge 1$,
with certain boundary conditions the number of solutions to Problem \ref{2} is
precisely $|\chi_{\G}(-k)|$ where $\chi_{\G}$ is the chromatic polynomial of $\G$.
This gives a novel way to compute $\chi_{\G}(-k)$ as the number of solutions to a system of rational equations. 
From work in \cite{Abrams2015} it also shows that $\chi_{\G}(-k)$ 
enumerates a certain set
of acyclic orientations of a related graph: see Theorem \ref{orient}. This enumeration appears to be 
distinct from the one given by Stanley in \cite{Stanley2006}, who also enumerated $\chi_{\G}(-k)$ with a set of orientations.

We give an explicit integral formula for $\chi_\G(-k)$ in Theorem \ref{integralthm} below.
Computing $\chi_{\G}(k)$ is generally \#P-complete; for $k\ge 3$, even approximating $\chi_{\G}(k)$ to within a constant factor
(in polynomial time) is NP-hard \cite{Goldberg2008}.
In principle one can approximate $\chi_{\G}$ at negative integer values using the integral formula (\ref{intform}) which has positive integrand. 
However since the integrand is oscillatory
it is not clear how quickly one can approximate this integral for general graphs.

While condition (\ref{qsum}) is important for geometric applications,
since it implies a M\"obius invariance for the set of realizations (see Section \ref{Mobius} below), it is less relevant for the applications
to circuits and colorings which are our concern in this paper. We explain how and why we can drop condition (\ref{qsum}) in certain situations in Section \ref{Mobius} below.

\section{Background}\label{bkgd}

Holomorphic quadratic differentials have a surprising number of geometric applications.
They were introduced in \cite{Lam2015a} for cell decompositions of surfaces. For a triangulated disk in the plane, it was shown there that there is a one-to-one correspondence between holomorphic quadratic differentials, infinitesimal (discrete) conformal deformations, discrete harmonic functions for the cotangent Laplacian and discrete minimal surfaces.  

In the case where the graph is a cell decomposition of a surface, Problem \ref{2} is equivalent to finding a polyhedral surface in space 
(with combinatorics dual to $\G$) with prescribed curvature on edges
\[
q_{uv} = \ell_{uv} \tan \frac{\alpha_{uv}}{2}
\] 
where $\ell$ and $\alpha$ denote edge lengths and dihedral angles, respectively. Indeed, the dihedral angles of such a surface
are determined by the face normals, whose stereographic projection yields a realization $z$ in the plane satisfying (\ref{zsum}). 
The condition $\sum_{v} q_{uv} =0$ indicates that the mean curvature on each face vanishes and thus the polyhedral surface is a 
\emph{discrete minimal surface} \cite{Lam2016}. 

For graphs not associated to surfaces, there are other applications.
For example, without the first condition (\ref{qsum}), the equation (\ref{zsum}) was studied in \cite{Abrams2015}
in the context of the Dirichlet problem on graphs with fixed edge energies.

As another illustration of where HQDs arise in circuits, 
we show below (section \ref{LCcircuitsection}) that holomorphic quadratic differentials (without condition (\ref{qsum}))
arise as the \emph{reactive power} in an LC circuit associated to the graph.

Not every realization $(z_v)_{v\in V}$ of a graph possesses a holomorphic quadratic differential $q$. 
For a fixed realization, finding a holomorphic quadratic differential involves solving a system of linear equations with $E$ unknowns and $3V_{int}$ constraints, and $E-3V_{int}$ can be positive or negative.
Even when it is negative, however, there may be nontrivial solutions. For example, 
realizations of the $\mathbb{Z}^2$-lattice equipped with $q=1$ on horizontal and $q=-1$ on vertical edges include \emph{orthogonal circle patterns}, see \cite{Bobenko1999}.

\section{Preliminaries}

\subsection{M\"obius invariance}\label{Mobius}
Given a realization $(z_v)$ of $\G$ with HQD $(q_{uv})$, and a M\"obius transformation $\phi:\hat\C\to\hat\C$,
The tuple $(\phi(z_v))_{v\in V}$ is also a realization with the same HQD. To see this,
note that it is true when $\phi(z) = az+b$ is affine, so it suffices to prove when $\phi(z)=1/z$ (since every M\"obius
transformation is a composition of affine transformations and inversions).

However when $\phi(z)=1/z$ we can write 
\begin{align*}\sum_v\frac{q_{uv}}{\phi(z_u)-\phi(z_v)}&=-z_u\sum_v\frac{q_{uv}z_v}{z_u-z_v}\\
&=-z_u\sum_v\frac{q_{uv}(z_v-z_u+z_u)}{z_u-z_v}\\
&=-z_u\left(\sum_v q_{uv} + z_u\sum_v\frac{q_{uv}}{z_u-z_v}\right)\\
&=0.\end{align*}

As a special case, suppose $\G$ has a boundary vertex $v_0$ attached to \emph{every} interior vertex, with value $z(v_0)=z_0$ in some realization.
Compose this realization with a M\"obius transformation taking $z_0$ to $\infty$. 
Then we obtain a realization of the smaller graph
$\G\setminus\{v_0\}$ satisfying (\ref{zsum}), but without the condition (\ref{qsum}).
Conversely let $q$ be an \emph{arbitrary} function on the edges of $\G\setminus\{v_0\}$ 
and find a realization of $\G\setminus\{v_0\}$ 
satisfying (\ref{zsum}). Then on $\G$  defining $q_{uv_0}$ for each $u$ so that (\ref{qsum}) is satisfied,
we find a realization of $\G$ with $z_0=\infty$.

\subsection{Space of $q$-values for a given realization}
A graph with boundary $\G$ is said to be \emph{$2$-connected} if every vertex is on a simple 
path between two distinct boundary vertices.

\begin{lemma}\label{fullrank} When $\G$ has a boundary and is $2$-connected, the space of solutions to (\ref{qsum}) is of dimension $|E|-|V_{int}|$.
\end{lemma}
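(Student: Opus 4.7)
The plan is to read equation~\eqref{qsum} as a linear system on $\R^E$ whose equations are indexed by $u\in V_{int}$. The coefficient matrix $M$ is the submatrix of the vertex-edge incidence matrix of $\G$ obtained by keeping only the rows labeled by interior vertices. The solution space has dimension $|E|-\operatorname{rank}(M)$, so the lemma reduces to showing that $M$ has full row rank $|V_{int}|$.

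I would dualize and work with the kernel of $M^T$. Suppose coefficients $(c_u)_{u\in V_{int}}$ give a linear dependence $\sum_u c_u M_{u,\cdot}=0$. Extending by $c_v:=0$ for $v\in V_b$, this dependence reads $c_x+c_y=0$ on every edge $\{x,y\}$: at edge $e=\{x,y\}$ the coefficient is $c_x\mathbf{1}[x\in V_{int}]+c_y\mathbf{1}[y\in V_{int}]=c_x+c_y$ after extension by zero. The problem becomes: show that any $c:V\to\R$ with $c\equiv 0$ on $V_b$ and $c_x+c_y=0$ on every edge must vanish identically.

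This is exactly where $2$-connectedness is used. Given any $u\in V_{int}$, choose a simple path $v_0,v_1,\dots,v_k$ with $v_0,v_k\in V_b$ distinct and $v_i=u$ for some $i$. Since $c_{v_0}=0$, the edge relation $c_{v_j}+c_{v_{j+1}}=0$ propagates inductively down the path to give $c_{v_j}=0$ for all $j$, hence $c_u=0$. As $u\in V_{int}$ was arbitrary, $c\equiv 0$, so the rows of $M$ are linearly independent and $\operatorname{rank}(M)=|V_{int}|$.

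The main obstacle, such as it is, is mostly bookkeeping: one must check that the propagation is unaffected if some intermediate $v_j$ of the path happens to be a boundary vertex (which a simple path from one boundary vertex to another may well pass through). But since $c_x+c_y=0$ holds on \emph{every} edge, the induction is indifferent to the interior/boundary status of $v_j$ and no deeper difficulty arises. The hypothesis that every edge has at least one interior endpoint is only invoked to ensure that the edge relation really is captured by the rows of $M$; no edge is "missed" by having both endpoints on the boundary.
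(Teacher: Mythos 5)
Your proof is correct. Both you and the paper reduce the lemma to the statement that the interior-rows incidence matrix $M:\R^E\to\R^{V_{int}}$ has rank $|V_{int}|$, but you establish this in the dual way: the paper proves surjectivity constructively, exhibiting for any target vector a preimage $q$ supported on a wired spanning tree (a spanning forest each of whose components contains exactly one boundary vertex) and peeling off internal leaves by induction, whereas you prove that the left kernel is trivial by propagating the relation $c_x+c_y=0$ from a boundary vertex (where $c$ vanishes) along a path through any given interior vertex. Your route is shorter and arguably cleaner; the paper's has the advantage of producing an explicit parametrization of the solution space (free values on the complement of a wired spanning tree, determined values on the tree), which is reused later in Section \ref{Tuttepolysection} to describe generic $q$. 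Two minor observations: your propagation only needs $c_{v_0}=0$ at one end of the path, so in fact connectivity of each interior vertex to the boundary already suffices (the same is true of the paper's wired-spanning-tree argument, so $2$-connectedness is stronger than necessary for this lemma in either proof); and your closing remark about edges with both endpoints on the boundary is harmless but not needed, since such an edge contributes a zero column to $M$ and affects neither the row rank nor the dimension count $|E|-\operatorname{rank}(M)$.
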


\begin{proof}
The condition (\ref{qsum}) says that $q$ is in the kernel of the incidence matrix $M:\R^E\to\R^{V_{int}}$. 
So it suffices to show that $M$ has full rank $|V_{int}|$, that is, that $M$ is surjective.
Given $\vec{v}\in\R^{V_{int}}$ let us find $q\in\R^E$ with $Mq=\vec{v}$.
It suffices to take $q$ with support on a \emph{wired spanning tree} $T$ of $\G$, that is a spanning forest every component of which 
contains a unique boundary vertex. Now proceed by induction
on the number of internal vertices: if $T$ has an internal leaf $i$, then $q$ on its edge is determined by $v_i$; 
remove this leaf and vertex and finish by induction.
\end{proof}

\subsection{Graph reductions}
We have assumed that $\G$ is a simple graph;  if $\G$ has self-loops then equation (\ref{zsum}) is not defined.

We can allow multiple edges. However if $\G$ has multiple edges then a realization of $\G$ gives a realization of the 
graph $\G'$ 
in which multiple edges have been replaced
by single edges, with the $q$-value on the new edge being the sum of the $q$-values on the multiple edges.

Suppose $\G$ has an interior vertex $v$ of degree $2$, with neighbors $u,w$. Consider a realization $(q,z)$. 
Then at $v$ we have (when $q_{uv}\ne0$)
$$\frac{q_{uv}}{z_v-z_u} - \frac{q_{uv}}{z_v-z_w} = 0,$$
or $z_u=z_w$. In particular a realization of $\G$ is also a realization of $\G'$, the graph in which edge $uv$ and $wv$ have been contracted
(and the values $q_{uv},q_{wv}$ have been discarded).
It is therefore convenient to assume that interior vertices of $\G$ have degree at least $3$.

\subsection{Example with a family of realizations}

When $\G$ is bipartite, and all boundary vertices are on the same side of the bipartition (say, all boundary vertices are white), 
then Problem 2 for certain nongeneric $q$ 
can have a one or more parameter family of solutions. 

\begin{figure}[htbp]
\center{\includegraphics[width=2.5in]{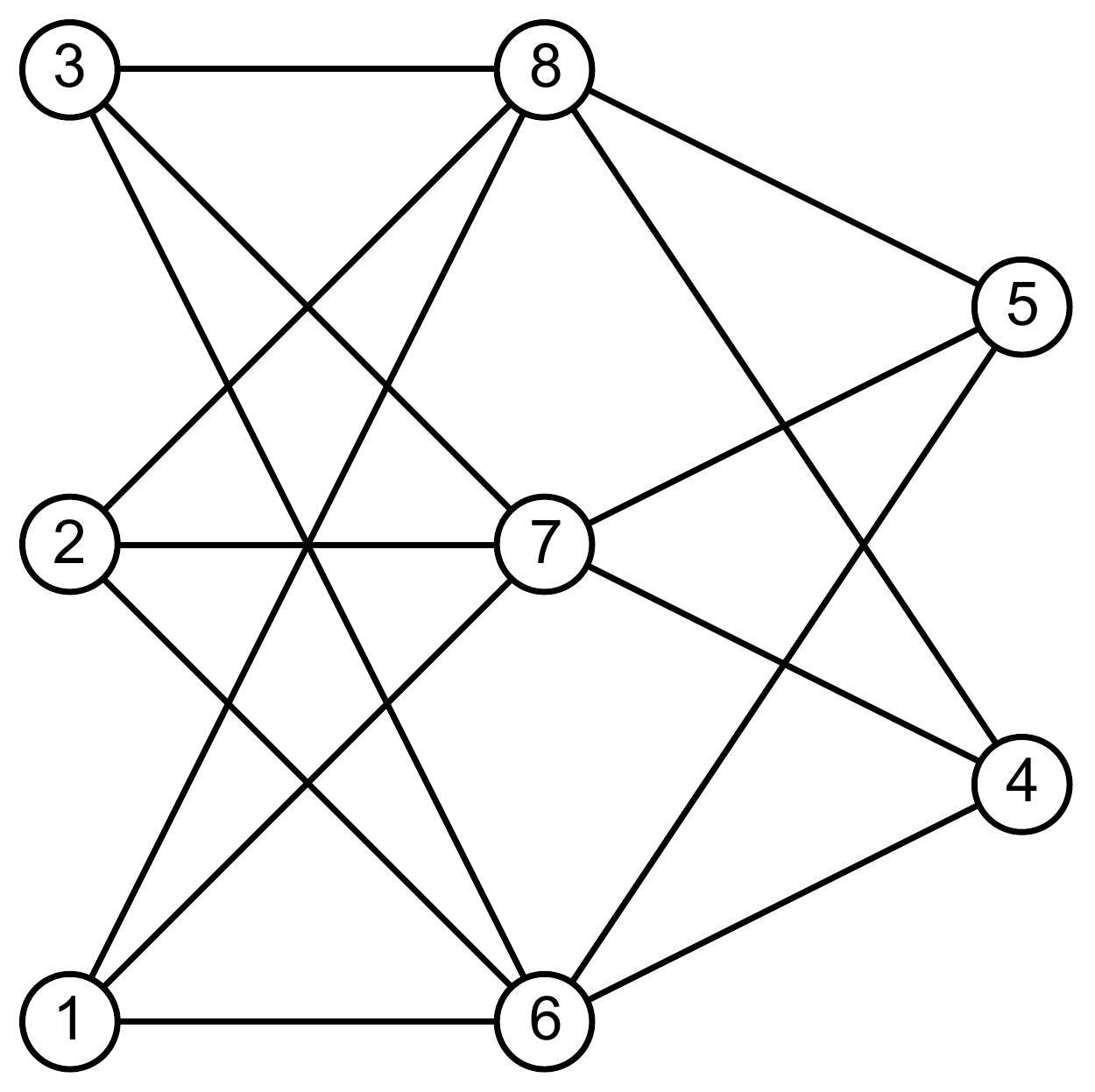}}
\caption{\label{332graph}}
\end{figure}
Consider for example the graph of Figure \ref{332graph}, with boundary vertices $v_1,v_2,v_3$. Suppose that $q$ satisfies
(\ref{qsum}) at all vertices including the boundary vertices. Then we claim that for fixed boundary values $z_1,z_2,z_3$, equation (\ref{zsum})
has a solution in which $z_6=z_7=z_8=b$ for any value of $b\ne z_1,z_2,z_3$. 
The equations at $v_4$ and $v_5$ are trivial, and the equations at $v_6,v_7,v_8$ are
\begin{align}
\label{e1}\frac{q_{64}}{b-z_4}+\frac{q_{65}}{b-z_5}+\frac{q_{61}}{b-z_1}+\frac{q_{62}}{b-z_2}+\frac{q_{63}}{b-z_3}&=0\\
\label{e2}\frac{q_{74}}{b-z_4}+\frac{q_{75}}{b-z_5}+\frac{q_{71}}{b-z_1}+\frac{q_{72}}{b-z_2}+\frac{q_{73}}{b-z_3}&=0\\
\label{e3}\frac{q_{84}}{b-z_4}+\frac{q_{85}}{b-z_5}+\frac{q_{81}}{b-z_1}+\frac{q_{82}}{b-z_2}+\frac{q_{83}}{b-z_3}&=0
\end{align}
and the last is a consequence of the first two (since $q_{6j}+q_{7j}+q_{8j}=0$ for $j=1,2,3,4,5$).
Given $b$ we can solve (\ref{e1}) and (\ref{e2}) for $z_4,z_5$ on condition that $q_{64}q_{75}-q_{65}q_{74}\ne0$,
since they are linear in $\frac1{b-z_4},\frac1{b-z_5}$.

\section{LC-circuits}\label{LCcircuitsection}

We show in this section that a holomorphic quadratic differential satisfying (\ref{zsum}) but not necessarily (\ref{qsum})
can be regarded as the \emph{reactive power} in an AC circuit consisting of inductors and capacitors, where the $z_v$'s represent the complex voltages. 

We consider a circuit with combinatorics $\mathcal{G}=(V,E)$, which is driven at boundary vertices 
by a sinusoidal wave with a common frequency $\omega$ but various phases. 
In order to explain the concept of real power and reactive power, 
we assume for the moment that there are resistors in the circuit as well. 

The voltages $\{U(t)\}$ at the boundary vertices are imposed and satisfy:
\[
U_v(t) = \Re(u_v e^{i\omega t}) 
\]
for some $u:V_b \to \C$ and fixed real $\omega$. 
Let us discuss how to compute the voltages at the interior vertices; these will have the same form for some 
function $u:V\to\C$. 

Under a sinusoidal waveform, associated to a capacitor with capacitance $C>0$, 
an inductor with inductance $L > 0$ and a resistor $R > 0$ is an \emph{impedance}
\begin{align*}
Z_{capacitor} =& \frac{1}{i \omega C} \\
Z_{inductor} =& i \omega L \\
Z_{resistor} =& R.
\end{align*}

We assume each edge consists of some subset of an inductor, a capacitor or a resistor, connected in series. 
The impedance on the edge is the sum of the impedances of the individual elements. We 
thus have $Z:E \to\C$ defined on unoriented edges. 
Note that if no resistor is present, $Z$ is pure imaginary. 

We define at each vertex the \emph{complex voltage} $\U:= u e^{i\omega t}$ and $U = \Re( \U)$. 
Ohm's law relates complex voltage $\U$, complex current $\I$ on edges, and impedance via
\[
\U_v  - \U_w= \I(e_{vw}) Z_{vw}
\]
and Kirchhoff's current law say that at each interior vertex $v$,
$$\sum_{w\sim v} \I(e_{vw})=0,$$
that is, there is no current lost at $v$.
Combining these two laws, we see that $\U$ must satisfy $\Delta\U=0$ at interior vertices for the 
associated Laplacian operator $\Delta:\C^V\to \C^V$ defined by
$$\Delta \U(v) = \sum_{w\sim v}\frac1{Z_{vw}}(\U(v)-\U(w)).$$
Typically $\Delta$ is nonsingular:
a frequency $\omega$ such that $\det\Delta=0$ is a \emph{resonant frequency}. The set of resonant
frequencies is finite  and nonempty if there are no resistances, see Theorem \ref{resonant} below.

For nonresonant frequencies, the function $\U$ exists and is harmonic at interior vertices, that is, $\Delta U(v)=0$ there. 
The \emph{real} current through the oriented edge $e_{uv}$ is
\[
I(e_{vw})= \Re(\mathbf{I}(e_{vw})) = \Re(e^{i \omega t} (u_w - u_v)/ Z_{vw}).  
\]
We have
\begin{align}
U_w - U_v &= \Re(e^{i \omega t} (u_w - u_v)) = |u_w-u_v| \cos(\omega t + \phi) \label{dV}\\
I(e_{vw}) &= \Re(e^{i \omega t} (u_w - u_v)/Z_{vw}) = \frac{|u_w-u_v|}{|Z_{vw}|} \cos(\omega t + \phi+\tilde{\phi})\label{I}
\end{align}
where $Z_{vw} = |Z_{vw}|e^{-i\tilde{\phi}}$. 
We now calculate the \emph{instantaneous power}, by multiplying (\ref{dV}) and (\ref{I}):
\[
P_{vw}(t) := (U_w - U_v) I(e_{vw}) = \frac{|u_w - u_v|^2}{2|Z_{vw}|}\Big( \cos(\tilde{\phi}) (1 + \cos(2\omega t + 2\phi)) - \sin \tilde{\phi} \sin(2 \omega t + 2\phi)\Big).
\]
In terms of \emph{complex power} 
\[
S_{vw}:= (\U_w - \U_v) \overline{\mathbf{I}(e_{vw})}= \frac{|u_w-u_v|^2}{|Z_{vw}|} (\cos \tilde{\phi} - i \sin \tilde{\phi}),
\]
the instantaneous power can be written as
\[
P_{vw}(t) =  \frac{1}{2}( \Re S_{vw} (1 + \cos(2\omega t + 2\phi)) + \Im S_{vw} \, \sin(2\omega t + 2\phi)).
\]
The first part of this equation 
\[
\frac{1}{2} \Re S_{vw} (1 + \cos(2\omega t + 2\phi))
\]
is non-negative and regarded as power dissipated in resistors. In fact if the edge consists of resistors only, it is the same as the instantaneous power since $\Im S =0$. In general, the average power over a period is $\frac12\Re S_{vw}$, which is called \emph{real power}.

On the other hand, the second part of the instantaneous power
\[
\frac{1}{2} \Im S_{vw} \, \sin(2\omega t + 2\phi)
\]
does not contribute any energy over a period. If the edge does not contain any resistors, then this term is exactly the instantaneous power as $\Re S_{vw}=0$. The quantity $\Im S_{vw}$ is called the \emph{reactive power}.

After this explanation of the physical meaning, we can now go back to our original setting where there are no resistors in the circuit and hence no energy loss. In this case the complex power is purely imaginary
\[
S_{vw} = (\U_w - \U_v) \overline{\mathbf{I}(e_{vw})} = |u_w - u_v|^2 / \overline{Z}_{vw} =: iq_{vw}
\]
where $q:E \to \mathbb{R}$ and $q_{vw} = q_{wv}$. Kirchhoff's current law implies that for every interior vertex $v$
\[
0 = \sum_w \mathbf{I}(e_{vw}) = \sum_w -\frac{iq_{vw}}{ \overline{u_w} - \overline{u_v}} e^{i\omega t} 
\]
which is equivalent to
\[
0 = \sum_w \frac{q_{vw}}{u_w - u_v}.
\]

\noindent{\bf Question.} Under what general circumstances will it be the case that (\ref{qsum}) holds (at internal vertices) as well? Is there a ``physical" meaning for such a circuit?

\section{Contraction and deletion}\label{Tuttepolysection}

If $e$ is an edge of $\G$ let $\G-e$ be the graph with edge $e$ deleted and $\G/e$ be the graph with edge $e$ contracted
(identify the two vertices of $e$ and remove $e$). 
We say $q$ satisfying (\ref{qsum}) is \emph{generic} if it is in a Zariski open set, that is, the $q$ are algebraically independent modulo equation
(\ref{qsum}). For example, we can choose $q$ algebraically independently on the complement of a wired spanning tree of $\G$; then the values on the tree
edges are determined uniquely by (\ref{qsum}), as in the proof of Lemma \ref{fullrank}. 

\begin{theorem}\label{contractdelete}
Suppose $\G$ has boundary $v_1,\dots,v_k$ with $k\ge 2$ and every interior vertex is connected to every boundary vertex.
Take a generic function $q:E \to \mathbb{R}$ satisfying (\ref{qsum}).
With distinct boundary values $\{z_k\}_{v_k\in V_b}$ prescribed, and $e$ an edge connecting two interior vertices,
the number $N(\G)$ of distinct solutions $\{z_i\}$ to 
(\ref{zsum}) satisfies the contraction-deletion relation $N(\G) = N(\G-e) + N(\G/e)$.
\end{theorem}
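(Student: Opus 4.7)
The plan is to prove the contraction-deletion relation by degenerating the edge weight $t := q_{uv}$ (with $e = uv$, both interior) to zero and tracking how isolated solutions of (\ref{zsum}) split between $\G - e$ and $\G/e$.

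First, I clear denominators at each interior vertex $w$ to obtain the polynomial
\[
P_w(z) := \sum_{w'\sim w} q_{ww'}\prod_{w''\sim w,\,w''\ne w'}(z_w - z_{w''});
\]
for generic data, realizations solving (\ref{zsum}) correspond to solutions of $\{P_w = 0\}_{w\in V_{int}}$ with pairwise distinct coordinates. Isolating the $t$-dependence yields the key factorization
\[
P_u = t\prod_{w'\sim u,\,w'\ne v}(z_u - z_{w'}) + (z_u - z_v)\,P_u^{\G-e}(z),
\]
and symmetrically $P_v = -t\prod_{w'\sim v,\,w'\ne u}(z_v - z_{w'}) + (z_v - z_u)\,P_v^{\G-e}(z)$, while the $P_w$ for $w \ne u, v$ do not depend on $t$. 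Thus at $t = 0$ both $P_u$ and $P_v$ acquire a common factor $(z_u - z_v)$.

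Next, I parametrize $q$ by $t$, adjusting $q_{uv_0}(t)$ and $q_{vv_0}(t)$ linearly in $t$ for a common boundary neighbor $v_0$ (which exists by hypothesis) so that (\ref{qsum}) holds identically in $t$; by genericity $q(t)$ remains generic on a Zariski-open set containing $t = 0$, and one checks directly that $q(0)$ restricted to $\G - e$ and contracted to $\G/e$ also satisfies (\ref{qsum}) and is generic there. At $t = 0$ the polynomial system decomposes by the common factor $(z_u - z_v)$: Type I solutions have $z_u \ne z_v$, which forces $P_u^{\G-e} = P_v^{\G-e} = 0$ and hence are in bijection with $\G - e$-solutions, contributing $N(\G - e)$; Type II solutions have $z_u = z_v = z^*$ and form a one-parameter family. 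To extract the isolated Type II limits of genuine $\G$-solutions, I add the equations (\ref{zsum}) at $u$ and $v$: the $\pm t/(z_u - z_v)$ terms cancel and one obtains
\[
\sum_{w\sim u,\,w\ne v}\frac{q_{uw}}{z_u - z_w} + \sum_{w\sim v,\,w\ne u}\frac{q_{vw}}{z_v - z_w} = 0,
\]
which in the limit $t \to 0$, $z_u, z_v \to z^*$, is precisely the equation at the merged vertex $u'$ of $\G/e$ with $q_{u'w} = q_{uw} + q_{vw}$. Hence these distinguished Type II limits are in bijection with $\G/e$-solutions and contribute $N(\G/e)$.

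The conclusion is then obtained by conservation of the number of isolated solutions in the one-parameter family $\{q(t)\}$: the count at generic $t$ is $N(\G)$, and at $t = 0$ the total number of isolated limit points is $N(\G - e) + N(\G/e)$. The main technical obstacle is precisely this conservation step, because the Type II locus at $t = 0$ is a positive-dimensional embedded component and the $t = 0$ fiber is therefore not a collection of $N(\G)$ reduced points. To handle this I would blow up the codimension-two locus $\{t = 0,\, z_u = z_v\}$ to separate the embedded component, count isolated intersections on the proper transform, and use the Zariski-open genericity of $q$ to rule out escape to infinity as $t \to 0$ and to ensure each surviving limit has multiplicity one; these facts together yield the desired identity.
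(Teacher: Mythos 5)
Your proposal is correct and follows essentially the same strategy as the paper: degenerate $q_e\to 0$ along a one-parameter family, observe that limit solutions with $z_u\ne z_v$ are exactly solutions on $\G-e$ while limits with $z_u=z_v$ are exactly solutions on $\G/e$ (your summed equation at $u$ and $v$ is precisely the merged-vertex equation), and conclude by conservation of the solution count. The only real difference is in how the hard step --- that each $\G-e$ or $\G/e$ solution is the limit of exactly one $\G$-solution, with multiplicity one --- is handled: the paper does this via the implicit function theorem with explicit derivative formulas (Lemmas \ref{harmonic} and \ref{uncontract}, where the blow-up coordinate you would introduce on the exceptional divisor over $\{t=0,\,z_u=z_v\}$ is identified concretely as the current $\lim q_e/(z_u-z_v)$ across $e$, nonzero by genericity), whereas you sketch an equivalent algebro-geometric blow-up and transversality argument without carrying it out.
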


\begin{proof}
We can assume $q_{vw}\in\C$, since the number of solutions $\{z_v\}_{v\in V}$ to 
(\ref{zsum}) is the same whether $q$ is generic over $\C$ or over $\R$. 

We claim that we can move a single interior $z_v$ arbitrarily, 
to get a nearby solution, by changing only the values $q_{vw}$ and $q_{wv_1}$ for neighbors $w$ of $v$. 
Suppose we change $z_v$ to $z_v+\delta$ for small $\delta$, keeping other $z$s fixed; 
we can simultaneously change $q_{vw}$ to $q_{vw}+\eps_w$, $q_{wv_1}$ to $q_{wv_1}-\eps_w$, and 
$q_{vv_1}$ to $q_{vv_1}-\sum_{w\sim v}\eps_w$ for some small $\eps_w$'s so that (\ref{zsum}) is still satisfied. 
To see this, differentiating (\ref{zsum}) at $w$ gives
$$\frac{dq_{vw}}{z_w-z_v} + \frac{q_{vw}dz_v}{(z_w-z_v)^2} + \frac{dq_{wv_1}}{z_w-z_{v_1}},$$
and setting $dq_{wv_1}=-dq_{vw}$ this is zero if
\be\label{dqdz}\frac{dq_{vw}}{dz_v} = \frac{q_{vw}(z_w-z_{v_1})}{(z_v-z_w)(z_v-z_{v_1})}.\ee
Differentiating (\ref{zsum}) at $v$ gives
$$\sum_{w\sim v, w\ne v_1} \left(\frac{dq_{vw}}{z_v-z_w}-\frac{q_{vw}dz_v}{(z_v-z_w)^2}\right)+
\frac{dq_{vv_1}}{z_v-z_{v_1}}-\frac{q_{vv_1}dz_v}{(z_v-z_{v_1})^2}.$$
Substituting $dq_{vv_1} = -\sum dq_{vw}$ this is also zero under condition (\ref{dqdz}). This completes the proof of the claim.

The genericity assumption on $q$s, with the above claim, now implies that for any solution $\{z_v\}_{v\in V}$, 
the values $z_v$ are distinct.

Now let $\{q(t), z_v(t)\}_{t\in[0,1]}$ be a continuous one-parameter family of quadratic holomorphic differentials
and realizations, with $q(0)=q$, with $q_{vw}(t)\ne0$ for $t<1$,
and $q(1)$ equal to zero on edge $e$ and nonzero elsewhere. Boundary $z$-values are fixed.
 
Each solution $\{z_v(t)\}$ varies continuously in $t$, and adjacent $z_v(t)$ are distinct for $t$ sufficiently small.
Under the genericity assumption we can assure that they remain distinct for all $t$: the path
just needs to avoid the (real codimension $2$) subset where some pair of $z$s become equal. 
The only exception is at $t=1$ where it may be the case that $z_v\to z_w$:

Now as $t\to1$, since $q_e=c_e(z_v-z_w)^2\to0$, either the current $c_e(z_v-z_w)\to0$ or the potential drop $z_v-z_w\to0$ (or both). 
We can rule out the case of both happening by genericity (of the approach vector of the path), 
since both going to zero is a complex codimension-two condition.
If $z_v\to z_w$, but $c_e(z_v-z_w)\not\to0$ then we obtain a solution on $\G/e$. If $z_v\not\to z_w$ but $c_e(z_v-z_v)\to0$ 
then we obtain a solution on $\G-e$. 

Conversely let us show that, given a solution on $\G/e$ or $\G-e$, we can construct a nearby solution on $\G$. 

First consider the case of a solution on $\G-e$. By Lemma \ref{harmonic} below and the implicit function theorem, 
starting with a realization of $\G-e$ satisfying $z_v\ne z_w$, we can find a nearby realization of $\G$ for any sufficiently small $q_{vw}$.

Similarly consider a solution on $\G/e$. Lemma \ref{uncontract} below (and the implicit function theorem) 
shows that we can build a nearby solution on $\G$ for small $q_e$. 
\end{proof}

If $z$ is a realization recall the associated Laplacian operator $\Delta$ defined by 
$$\Delta f(v) = \sum_w c_{vw}(f_v-f_w),$$
where $c_{vw} = q_{vw}/(z_v-z_w)^2$. It is invertible for generic $q$.
Note that by (\ref{zsum}), $z$ is harmonic for $\Delta$ at interior vertices.
The inverse of $\Delta$ is the \emph{Green's function} $G=(G_{v,w})_{v,w\in V_{int}}$.

\begin{lemma}\label{harmonic}
For generic $q$, 
let $z$ be a solution to (\ref{zsum}), $v$ a vertex and $e=v_1v_2$ an edge. Then 
\be\label{GG}\frac{dz_v}{dq_e} = \frac{G_{v,v_1}-G_{v,v_2}}{z_{v_1}-z_{v_2}}.\ee
\end{lemma}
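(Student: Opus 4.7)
The plan is to differentiate equation (\ref{zsum}) implicitly with respect to $q_e$, holding all other $q_{uw}$ and all boundary values of $z$ fixed, and to recognize the resulting linear equation for $\dot z := dz/dq_e$ as $\Delta\dot z = \psi$ for an explicit source $\psi$ supported on $\{v_1,v_2\}$. Inverting with the Green's function $G$ will then yield (\ref{GG}) directly.

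First I would view (\ref{zsum}) as a system $F(z)=0$ in the interior unknowns $(z_u)_{u\in V_{int}}$, where $F_u(z) = \sum_w q_{uw}/(z_u-z_w)$ with boundary $z$-values plugged in. A direct computation gives
\[
\frac{\partial F_u}{\partial z_u} = -\sum_w c_{uw}, \qquad \frac{\partial F_u}{\partial z_w} = c_{uw} \quad (w\sim u),
\]
so the $z$-Jacobian of $F$ equals $-\Delta$. Since $\Delta$ is invertible for generic $q$, the implicit function theorem produces a smooth curve of solutions $z(q_e)$ near the given one, with $\dot z$ vanishing on $V_b$ because boundary values are held fixed.

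Now differentiate $F_u(z(q_e))\equiv 0$ in $q_e$. The explicit $q$-dependence contributes $(\partial q_{uw}/\partial q_e)/(z_u-z_w)$ to the $u$-equation, which equals $\pm 1/(z_{v_1}-z_{v_2})$ when $u$ is $v_1$ or $v_2$ and vanishes otherwise; the $z$-dependence contributes $-(\Delta\dot z)(u)$ by the Jacobian computation above. Therefore
\[
(\Delta\dot z)(u) \;=\; \frac{\mathbbm{1}_{u=v_1}-\mathbbm{1}_{u=v_2}}{z_{v_1}-z_{v_2}}.
\]
Applying $G=\Delta^{-1}$ gives
\[
\dot z_v \;=\; \sum_{u\in V_{int}} G_{v,u}(\Delta\dot z)(u) \;=\; \frac{G_{v,v_1}-G_{v,v_2}}{z_{v_1}-z_{v_2}},
\]
with the convention $G_{v,w}=0$ whenever an index lies on $V_b$ to handle the case where $e$ is incident to the boundary. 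The whole calculation is routine; the only conceptual point, and the one that makes the final formula so clean, is the identification of the $z$-Jacobian of the defining equations with $-\Delta$ itself, so that the same operator expressing the harmonicity of $z$ also controls its sensitivity to $q$.
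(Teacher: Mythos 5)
Your proposal is correct and follows essentially the same route as the paper: differentiate (\ref{zsum}) in $q_e$, observe that the $z$-Jacobian of the defining equations is $-\Delta$ so that $\dot z$ is harmonic away from $v_1,v_2$ with source $\pm\,dq_e/(z_{v_1}-z_{v_2})$ at those two vertices and zero boundary data, and invert by the Green's function. The extra scaffolding (implicit function theorem, Dirichlet convention $G_{v,w}=0$ for boundary indices) is consistent with the paper's setup and changes nothing of substance.
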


The formula still holds when $v=v_1$ or $v=v_2$.

\begin{proof}
Differentiate the equation (\ref{zsum}) with respect to $q_e$ keeping other $q$s fixed. If $v\ne v_1,v_2$ then
$$0= -\sum_{w\sim v}\frac{q_{vw}}{(z_v-z_w)^2}(dz_v-dz_w) = -\sum_{w\sim v}c_{vw}(dz_v-dz_w)$$
so that $dz$ is harmonic at $v$. 
If $v=v_1$ then 
$$0= \frac{dq_{e}}{z_{v_1}-z_{v_2}} - \sum_{w\sim v}c_{vw}(dz_v-dz_w)$$ and similarly at $v=v_2$,
so that $dz$ is harmonic at all vertices except $v_1,v_2$ where it has Laplacian 
$\pm\frac{dq_{e}}{z_{v_1}-z_{v_2}}$ respectively, and the boundary where it is zero. This is precisely (\ref{GG}).
\end{proof}

Note that given a harmonic function on a graph  $\G/e$ with a contracted edge $e$
we can still define the \emph{current across $e$} as the sum of the
currents on edges connecting (in $\G$) to the head of $e$, when these edges are oriented away from $e$.

\begin{lemma}\label{uncontract}
Let $e=v_1v_2$ be an edge of $\G$.
For generic $q$ on $\G/e$, let $z_0$ be a realization of $\G/e$. 
Then for small $q_e$ there is a unique realization $z$ of $\G$ with,  for all $v\in\G$, 
$z_v-z_{0,v}= O(q_e)$. We have $$\frac{dz_v}{dq_e} =\frac{I_e^v}{I_e}$$
where $I_{e}^v$ is the current across $e$ in $\G/e$ from the Green's function $G_{\G/e}(v,\cdot)$
and $I_{e}$ is the current of $z$ across $e$ in $\G/e$ due to the boundary conditions.
\end{lemma}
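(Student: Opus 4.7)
The plan is to resolve the singularity at $q_e = 0$ by a change of variables, apply the implicit function theorem, and then derive the derivative formula by taking a limit of Lemma \ref{harmonic} applied to $\G$. The singularity arises because, as $q_e \to 0$ with the current $q_e/(z_{v_1}-z_{v_2})$ kept finite, the gap $z_{v_1}-z_{v_2}$ shrinks to zero while the edge conductance $c_e = q_e/(z_{v_1}-z_{v_2})^2$ diverges; infinite conductance along $e$ is equivalent to contracting $e$, which is why the limiting realization lives on $\G/e$.

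To resolve this, I would replace the coordinates $(z_{v_1},z_{v_2})$ by $(\zeta,J)$, where $\zeta = (z_{v_1}+z_{v_2})/2$ and $J = q_e/(z_{v_1}-z_{v_2})$, so that $z_{v_1} = \zeta + q_e/(2J)$ and $z_{v_2} = \zeta - q_e/(2J)$. After this substitution, the HQD equations of $\G$ are rational and regular at $q_e = 0$. At $q_e = 0$, the point $(z_v = z_{0,v},\ \zeta = z_{0,v_*},\ J = I_e)$ is a solution: the equations at interior vertices $v \ne v_1,v_2$, together with the sum of the equations at $v_1$ and $v_2$, reduce to the $\G/e$ equations at the corresponding vertices; the $v_1$ equation, which contains the singular term $q_e/(z_{v_1}-z_{v_2}) = J$, reduces to $J - I_e = 0$ by the definition of $I_e$.

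Next I would compute the Jacobian of this rewritten system with respect to $((z_v)_{v\ne v_1,v_2},\zeta,J)$ at the base point. Every equation except the $v_1$ equation depends on $J$ only through $q_e/J$, whose derivative in $J$ vanishes at $q_e = 0$; the $v_1$ equation has $\partial/\partial J = 1$ from its explicit $J$-term. Ordering rows and columns compatibly, the Jacobian takes the block-triangular form
\[\begin{pmatrix}\Delta_{\G/e} & 0 \\ * & 1\end{pmatrix},\]
where $\Delta_{\G/e}$ is the Laplacian on $\G/e$ at $z_0$ with conductances $q_{vw}/(z_{0,v}-z_{0,w})^2$; edges from $v_1$ and $v_2$ to a common neighbor add as a multi-edge in $\G/e$ exactly as they do in this block. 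For generic $q$ on $\G/e$ this Laplacian is invertible, so the implicit function theorem yields a unique smooth family $(z(q_e),\zeta(q_e),J(q_e))$ with $z_v - z_{0,v} = O(q_e)$ for every $v$.

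Finally, for the derivative formula, I would apply Lemma \ref{harmonic} to $\G$ at $q_e>0$, giving $\frac{dz_v}{dq_e} = (G_{\G}(v,v_1)-G_{\G}(v,v_2))/(z_{v_1}-z_{v_2})$. By Ohm's law applied to $G_\G(v,\cdot)$ on edge $e$, the numerator equals $I_e^{v,\G}/c_e$, where $I_e^{v,\G}$ is the current through $e$ in $\G$ due to a unit source at $v$. Using $c_e q_e = J^2$ and $z_{v_1}-z_{v_2} = q_e/J$, the ratio simplifies to $I_e^{v,\G}/J$. Letting $q_e\to 0$, $c_e \to \infty$ forces $G_\G(v,\cdot)\to G_{\G/e}(v,\cdot)$ after identifying $v_1, v_2$ with $v_*$, and hence $I_e^{v,\G}\to I_e^v$; together with $J \to I_e$, this yields $dz_v/dq_e = I_e^v/I_e$. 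The main obstacles are verifying that the upper-left Jacobian block really coincides with $\Delta_{\G/e}$ with the correct handling of multi-edges, and justifying the high-conductance Green's-function limit $I_e^{v,\G}\to I_e^v$.
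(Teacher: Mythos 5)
Your proposal is correct and, for the derivative formula, follows essentially the same route as the paper: rewrite the formula of Lemma \ref{harmonic} as a ratio of currents by multiplying numerator and denominator by $c_e$, and pass to the contracted limit where these become the transfer current $I_e^v$ and the boundary-driven current $I_e$ in $\G/e$. Your $(\zeta,J)$ change of variables making the system regular at $q_e=0$ is a more explicit version of the implicit-function-theorem step that the paper only asserts, and the two obstacles you flag (identifying the upper-left Jacobian block with $\Delta_{\G/e}$, and the high-conductance Green's function limit) are precisely the points the paper also leaves implicit.
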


\begin{proof}
For the graph $\G$ we can rewrite (\ref{GG}) by multiplying the numerator and denominator of the RHS 
by $c_e$. Then the denominator
corresponds to the current across $e$ for the harmonic function $z$. When the edge $e$ is contracted this
becomes the current across $e$ in $\G/e$ which is nonzero
by genericity. (The numerator, $(G_{v,u_2}-G_{v,u_1})c_e$,
is the \emph{transfer current} from $v$ to $e$:
the current across $e$ due to the function with boundary values $z_b$, and which is harmonic on interior vertices of $\G/e$ except at $v$, where one unit of current enters.)
\end{proof}

\begin{theorem}\label{tuttethm}Suppose $\G$ has boundary of size $k\ge 2$ and every interior vertex is connected to every boundary vertex. Let $\G_{int}$ be the induced graph on the interior vertices.
Then the number $N_{\G}$ of realizations $z$ for fixed generic $q$ is $(-1)^{|V_{int}|}\chi(-k+2)$ where $\chi$ 
is the chromatic polynomial of $\G_{int}$.
\end{theorem}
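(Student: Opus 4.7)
My plan is to prove this by induction on the number of edges of $\G_{int}$, using the contraction-deletion relation of Theorem \ref{contractdelete} on the left-hand side and the standard deletion-contraction for the chromatic polynomial on the right-hand side.

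\textbf{Base case.} Suppose $\G_{int}$ has no edges, so every interior vertex $v$ is adjacent only to the $k$ boundary vertices $v_1,\dots,v_k$. Then (\ref{zsum}) at $v$ decouples: it becomes a single equation in the single unknown $z_v$. Clearing denominators gives a polynomial equation $\sum_{i=1}^{k} q_{v,v_i}\prod_{j\ne i}(z_v-z_{v_j})=0$. The coefficient of $z_v^{k-1}$ is $\sum_i q_{v,v_i}$, which vanishes by (\ref{qsum}) (since $v$'s only neighbors are boundary vertices). The coefficient of $z_v^{k-2}$ equals $\sum_i q_{v,v_i}z_{v_i}$, which is nonzero for generic $q$. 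Hence the equation has exactly $k-2$ distinct solutions in $z_v$, none coinciding with any $z_{v_i}$ by genericity. The interior equations being independent, $N(\G)=(k-2)^{|V_{int}|}$; this matches $(-1)^{|V_{int}|}\chi_{\G_{int}}(-k+2)$ since $\chi_{\G_{int}}(x)=x^{|V_{int}|}$ for an edgeless graph.

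\textbf{Inductive step.} Suppose $\G_{int}$ contains an edge $e$; note $e$ connects two interior vertices of $\G$, so Theorem \ref{contractdelete} applies, giving $N(\G)=N(\G-e)+N(\G/e)$. Both $\G-e$ and $\G/e$ still satisfy the hypothesis that every interior vertex is adjacent to every boundary vertex (deletion preserves boundary adjacencies; contraction merges two interior vertices both adjacent to all boundary vertices). Moreover $(\G-e)_{int}=\G_{int}-e$ has $|V_{int}|$ interior vertices while $(\G/e)_{int}=\G_{int}/e$ has $|V_{int}|-1$. By the inductive hypothesis,
\begin{align*}
N(\G) &= (-1)^{|V_{int}|}\chi_{\G_{int}-e}(-k+2) + (-1)^{|V_{int}|-1}\chi_{\G_{int}/e}(-k+2) \\
&= (-1)^{|V_{int}|}\bigl[\chi_{\G_{int}-e}(-k+2)-\chi_{\G_{int}/e}(-k+2)\bigr] \\
&= (-1)^{|V_{int}|}\chi_{\G_{int}}(-k+2),
\end{align*}
using $\chi_G=\chi_{G-e}-\chi_{G/e}$.

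\textbf{Main obstacles.} The cleanest steps are the recursion matching and the sign bookkeeping. The more delicate points are: (i) verifying that genericity of $q$ on $\G$ descends to genericity on $\G-e$ and $\G/e$ in a sense sufficient for the inductive hypothesis to apply (this should follow by realizing the inductive step as a limit along a generic path, as in the proof of Theorem \ref{contractdelete}); and (ii) confirming in the base case that the leading coefficient really does drop out via (\ref{qsum}) and the next coefficient is generically nonzero, so that one obtains exactly $k-2$ solutions per interior vertex rather than $k-1$. Both are straightforward but must be checked carefully.
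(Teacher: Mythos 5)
Your proof is correct and runs on the same engine as the paper's, namely the contraction--deletion relation of Theorem \ref{contractdelete}; the difference lies entirely in how the recursion is closed out. The paper identifies $N_{\G}$ with $(k-2)\,T_{\G_{int}}(k-1,0)$, which forces it to evaluate the base case on trees (the natural stopping point of the Tutte recursion, giving $(k-2)(k-1)^{|E|}$) and then to invoke the identity $\chi_{\G}(-x)=(-1)^{|\G|}xT(1+x,0)$; it also first sends one boundary vertex to $\infty$ by a M\"obius transformation. You instead induct directly against the chromatic polynomial's own relation $\chi_G=\chi_{G-e}-\chi_{G/e}$, so your base case is the edgeless graph, where the count $(k-2)^{|V_{int}|}$ falls out of a one-variable degree computation: condition (\ref{qsum}) kills the top coefficient and genericity keeps the coefficient $\sum_i q_{v,v_i}z_{v_i}$ alive. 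This is arguably cleaner --- no Tutte polynomial, no M\"obius normalization, and the sign $(-1)^{|V_{int}|}$ is tracked transparently --- and your computation is consistent with the paper's (which, after deleting the vertex at infinity, sees a single interior vertex with $k-1$ boundary neighbors and no constraint (\ref{qsum}), again yielding $k-2$ roots). Two bookkeeping points deserve explicit mention: parallel edges created by contraction must be merged with summed (still generic) $q$-values so that $\G_{int}$ stays simple and never acquires a self-loop (the paper instead handles self-loops by noting $N_{\G}=0$ there), and, as you flag, both your argument and the paper's rely without detailed justification on the limiting $q$'s on $\G-e$ and $\G/e$ remaining generic so the recursion can be iterated.
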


As a special case when the boundary has size $k=2$ there are no realizations.

\begin{proof} As before we send one boundary vertex to $\infty$ using a M\"obius transformation,
and assume that $q$ is positive on the remaining graph, which now has boundary of size $k-1$.

Recall that the Tutte polynomial $T_{\H}(x,y)$ of a graph $\H$ can be defined by the deletion-contraction relation 
$T_{\H}(x,y)=T_{\H-e}(x,y) + T_{\H/e}(x,y)$ 
(for $e$ not a bridge or selfloop)
with the base cases
$T_{\H}(x,y)= x^iy^j$ if $\H$ has $i$ bridges and $j$ selfloops (and no other edges) \cite{Tuttepoly}. 
By Theorem \ref{contractdelete} the quantity $N_{\G}$
satisfies the same recurrence as the Tutte polynomial of $\G_{int}$. However note that
if $\G_{int}$ has a self-loop then $N_{\G}=0$. 
Thus $N_{\G}$ can be identified with a multiple of $T_{\G_{int}}(x,0)$. 

A connected graph with only bridges for edges is a tree,
so it remains to compute $N_{\G}$ when $\G_{int}$ is a tree.
If $\G_{int}$ is a tree, let us see how $N_{\G}$ changes when we delete or contract an edge connected to a leaf.
Contracting an edge connected to a leaf yields a tree with one fewer edge; deleting the edge
yields a single vertex (for which $N_{\{v\}}=k-2$) and a tree with one fewer edge. Thus
\begin{align*}N_{\G} &= N_{\G/e} + N_{\G-e}\\
&= N_{\G/e} + N_{\G/e}N_{\{v\}}\\
&= N_{\G/e} + N_{\G/e}(k-2)
\end{align*} 
from which we arrive at 
$N_{\G} = (k-2)(k-1)^{|E|}$. 
We find $N_{\G} =(k-2)T_{\G_{int}}(k-1,0)$. Finally, for a connected graph,
$\chi_{\G}(-x) = (-1)^{|\G|}xT(1+x,0)$.
\end{proof}

\subsection{Chromatic polynomial and compatible orientations}

For a graph with boundary $V_b$ and a function $u:V_b\to\R$, a \emph{compatible orientation}
is an acyclic orientation, with no internal sources or sinks, with no oriented path from a
boundary vertex to another boundary vertex of higher or equal $u$-value.

In \cite{Abrams2015} it was shown that the number of compatible orientations for a graph with distinct 
boundary values does not depend on the values themselves (or even their relative order).

\begin{theorem}\label{orient} For a graph $\G$ and integer $k\ge 1$, $\chi_{\G}(-k)$ is the number of compatible orientations of $\G_k$,
the graph obtained from $\G$ by adding $k+1$ additional vertices, playing the role of the boundary,
each attached to every vertex of $\G$, and given distinct boundary values.
\end{theorem}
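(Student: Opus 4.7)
The plan is to combine Theorem \ref{tuttethm}, the M\"obius trick of Section \ref{Mobius}, and the main result of \cite{Abrams2015}. First I would enlarge $\G_k$ to an auxiliary graph $\widetilde{\G}$ by adjoining one additional boundary vertex $v_0$ connected to every vertex of $\G$; then $\widetilde{\G}$ has interior $\G$ and boundary of size $k+2$, with every interior vertex joined to every boundary vertex, so Theorem \ref{tuttethm} applies and yields the realization count $N_{\widetilde{\G}}=(-1)^{|\G|}\chi_\G(-k)$ for any generic $q$ satisfying (\ref{qsum}) and any choice of distinct boundary values in $\C$.

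Second, I would apply the M\"obius argument from Section \ref{Mobius} to send $z_{v_0}$ to $\infty$. Because $v_0$ is adjacent to every interior vertex, that argument provides a bijection between realizations of $\widetilde{\G}$ satisfying both (\ref{qsum}) and (\ref{zsum}) and realizations of $\G_k=\widetilde{\G}\setminus\{v_0\}$ satisfying only (\ref{zsum}) for an unconstrained $q$ on $\G_k$, with the values $q_{uv_0}$ being uniquely determined by (\ref{qsum}) on $\widetilde{\G}$. Hence $N_{\widetilde{\G}}$ also equals the number of realizations of $\G_k$ satisfying (\ref{zsum}) alone for generic $q$. Finally I would invoke the main result of \cite{Abrams2015}: for a graph with distinct prescribed boundary values, the number of realizations of (\ref{zsum}) for generic $q$ equals the number of compatible orientations, a count that is independent of the chosen boundary values. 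Chaining the three identifications yields the desired $|\chi_\G(-k)|$ compatible orientations of $\G_k$.

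The step I expect to be most delicate is verifying that the M\"obius correspondence intertwines the two notions of genericity: a Zariski-generic $q$ on $\widetilde{\G}$ modulo (\ref{qsum}) must pass to a Zariski-generic unconstrained $q$ on $\G_k$, and conversely. This is needed to ensure that the count produced by Theorem \ref{tuttethm} on $\widetilde{\G}$ really agrees with the count governed by the Abrams--Kenyon theorem on $\G_k$, so that the Tutte value $\chi_\G(-k)$ really equals the combinatorial count of compatible orientations. The overall sign $(-1)^{|\G|}$ is absorbed into the positivity of the combinatorial count, giving the absolute value $|\chi_\G(-k)|$ stated (and abbreviated in the theorem as $\chi_\G(-k)$).
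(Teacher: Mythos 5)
Your proposal is correct and follows essentially the same route as the paper: apply Theorem \ref{tuttethm} to the graph with $\G$ as interior and $k+2$ complete boundary vertices to get $(-1)^{|\G|}\chi_\G(-k)$, use the M\"obius argument of Section \ref{Mobius} to send one boundary vertex to $\infty$ and pass to $\G_k$ with unconstrained $q$, then invoke the theorems of \cite{Abrams2015} to identify the realization count with the number of compatible orientations. The only (cosmetic) difference is that you build up from $\G_k$ rather than paring down from the larger graph, and your flagged concern about matching the two notions of genericity is a legitimate point that the paper also passes over lightly (it simply asserts the remaining $q$ may be taken positive).
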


\begin{proof}
From Theorem \ref{tuttethm} above, $|\chi(-k)|$ is the number of solutions to the system (\ref{zsum})
for fixed generic $q$ satisfying (\ref{qsum}). By the remarks in Section \ref{Mobius}, we can 
construct a solution with $1$ fewer boundary vertex by sending its $z$-value to $\infty$.
In this case we can assume without loss of generality that the $q$ on the remaining edges are
positive. Now Theorems 1 and 2 of \cite{Abrams2015} relate the set of solutions with the set of compatible orientations.
\end{proof}

We can use this idea to construct an integral whose value is $|\chi_{\G}(-k)|$, as follows.
Construct the graph $\G_k$ as above, and assign it distinct boundary values $x_0,\dots,x_k\in\R$. 
For any choice of conductances $(c_1,\dots,c_m)$ on the edges of $\G_k$, let $h$ be the harmonic extension,
that is, the solution to the Dirichlet problem with boundary values $x_0,\dots,x_k$.
Let $\Psi:\R^m\to\R^m$ be the map from tuples of edge conductances $(c_1,\dots,c_m)$ of $\G_k$ to tuples of energies 
$(q_1,\dots,q_m)$ for the harmonic extension: here $q_{uv} = c_{uv}(h(u)-h(v))^2$. 
In \cite{Abrams2015} it is shown that the Jacobian of $\Psi$ is
$\pm\prod_e \frac{q_e}{c_e},$ and the degree of $\Psi$ as a map on projective space is the number of compatible orientations.
We can compute the degree of $\Psi$ by integrating its Jacobian. 

Because $\Psi$ is homogeneous and $\Phi^{-1} ((0,\infty) ^E) \subset (0,\infty)^E$ we can simply integrate $\tilde\Psi$
over the simplex $\Delta_m$ of (positive) conductances whose sum is $1$, where $\tilde\Psi=\pi\circ\Psi$ is $\Psi$ projected back to the 
unit simplex. Since the Jacobian of $\pi$ is $Z^{-m}$ where $Z=\sum_e q_e$, the Jacobian of $\tilde\Psi$ is $\prod_{e=uv}\frac{(h(u)-h(v))^2}{Z}$.
We have proved:

\begin{theorem}\label{integralthm} The chromatic polynomial at $-k$ satisfies
\be\label{intform}
|\chi_{\G}(-k)| = \frac1{|\Delta_m|}\int_{\Delta_m} \prod_{e=uv}\frac{(h(u)-h(v))^2}{Z} \,d\mathrm{vol}\ee
where the integral is over the $m$-simplex of conductances on $\G_k$ normalized to sum to $1$, $h$ is the harmonic extension with the given conductances and boundary values $x_0,\dots,x_k$,
$Z=\sum_e q_e$ is the total Dirichlet energy of $h$ and $d\mathrm{vol}$ is the Lebesgue measure on $\Delta_m$.
\end{theorem}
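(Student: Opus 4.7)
The plan is to formalize the outline sketched immediately before the theorem. The starting point is Theorem \ref{orient}, which identifies $|\chi_\G(-k)|$ with the number of compatible orientations of $\G_k$; by the main result of \cite{Abrams2015} this number equals the topological degree of the map $\Psi$ from edge conductances of $\G_k$ to the Dirichlet energies $q_{uv}=c_{uv}(h(u)-h(v))^2$ of the associated harmonic extension $h$. Taking as additional input the cited Jacobian identity $J_\Psi=\pm\prod_{e=uv}(h(u)-h(v))^2$, the problem reduces to rewriting this degree as the integral claimed in the statement.

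Next I would projectivize. Since $h$ is invariant under a global rescaling of the conductances, $\Psi$ is homogeneous of degree one, and together with the cited inclusion $\Psi^{-1}((0,\infty)^E)\subset (0,\infty)^E$ this allows passing to the map $\tilde\Psi=\pi\circ\Psi:\Delta_m\to\Delta_m$, where $\pi(x)=x/\sum_i x_i$. Because $J_\Psi$ has constant sign on the positive orthant (being $\pm$ a product of squares), so does $J_{\tilde\Psi}$, and the degree of $\tilde\Psi$ equals the degree of $\Psi$. The change-of-variables formula then gives
\be\label{plandegint}
\deg(\tilde\Psi)\cdot|\Delta_m|=\int_{\Delta_m}|J_{\tilde\Psi}|\,d\mathrm{vol}.
\ee
The core calculation is then to establish $J_{\tilde\Psi}(c)=J_\Psi(c)/Z^m$ on $\Delta_m$, with $Z=\sum_e q_e$. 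Differentiating $\tilde\Psi$ in an affine chart on $\Delta_m$ and using Euler's identity $\sum_i c_i\partial_i\Psi_j=\Psi_j$ (valid because $\Psi$ is degree-one homogeneous), the cross terms in the determinant for $J_{\tilde\Psi}$ cancel, leaving exactly $J_\Psi/Z^m$. Substituting $J_\Psi=\pm\prod_{e=uv}(h(u)-h(v))^2$, a product of $m$ factors (one per edge), produces the integrand $\prod_{e=uv}(h(u)-h(v))^2/Z$ of the statement; dividing (\ref{plandegint}) by $|\Delta_m|$ then concludes the proof.

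The main obstacle I anticipate is the Jacobian identity $J_{\tilde\Psi}=J_\Psi/Z^m$: it requires choosing an explicit affine parametrization of $\Delta_m$ and carefully tracking how Euler's relation eliminates the redundant radial direction, producing precisely the $Z^{-m}$ needed to match the product structure of the integrand. A secondary concern is verifying that $\tilde\Psi$ is sufficiently proper for the topological degree to equal the integral of its Jacobian: one must check that no preimages of a generic target point escape to the boundary of $\Delta_m$, which should follow from the cone invariance $\Psi^{-1}((0,\infty)^E)\subset(0,\infty)^E$ together with the genericity of the distinct boundary values $x_0,\dots,x_k$, which prevents the vanishing of any factor $(h(u)-h(v))^2$ along a generic ray.
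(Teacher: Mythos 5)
Your proposal follows essentially the same route as the paper: it invokes Theorem \ref{orient} together with the results of \cite{Abrams2015} identifying the degree of $\Psi$ with the number of compatible orientations and giving the Jacobian $\pm\prod_e q_e/c_e = \pm\prod_e (h(u)-h(v))^2$, then uses homogeneity of $\Psi$ to pass to $\tilde\Psi$ on the simplex and the factor $Z^{-m}$ to obtain the integrand. The extra care you take with Euler's identity and the properness of $\tilde\Psi$ only fleshes out steps the paper asserts more briefly; the argument is correct.
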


As an example, let $\G$ consist of a single vertex $v$. Then $\G_3$ is a star with three boundary vertices $v_0,v_1,v_2$. 
Assign them boundary values $2,1,0$ respectively
and let their corresponding edge conductances be $c_0,c_1,c_2$ which sum to $1$. The harmonic extension $h$ then 
has $h(v) = 2c_0+c_1$.
The above integral is
\begin{align*}
|\chi_{\G}(-2)|&=2\int_0^1\int_0^{1-c_0}\frac{h(v)^2(h(v)-1)^2(h(v)-2)^2}{(c_2h(v)^2+c_1(h(v)-1)^2+c_0(h(v)-2)^2)^3}dc_1\,dc_0\\
&=2\int_0^1\int_0^{1-c_0}\frac{(2 c_0+c_1-2)^2(2 c_0+c_1-1)^2(2c_0+c_1)^2}{(-4 c_0^2-4 c_1 c_0+4 c_0-c_1^2+c_1)^3}
dc_1\,dc_0\\
   &= 2.\end{align*}
The value of the integral does not depend on the precise boundary values $0,1,2$ we used; can we further simplify
this integral by taking particular boundary values, for example when the boundary values are highly skewed?

\section{Appendix}

\begin{theorem}\label{resonant} In an LC circuit in which not all spanning trees have the same number of inductors, the set of resonant frequencies is finite and nonempty.
\end{theorem}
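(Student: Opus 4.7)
The plan is to rewrite the resonance condition as the vanishing of the characteristic polynomial of a pencil of two real symmetric positive semidefinite matrices, and then combine the matrix-tree theorem with a Descartes-type sign count. First, split the Laplacian along the edge partition $E=E_C\sqcup E_L$ into capacitors and inductors. Since the admittance is $i\omega C_e$ on a capacitor and $-i/(\omega L_e)$ on an inductor,
\[
\Delta(\omega)\;=\;i\omega\,M_C - \tfrac{i}{\omega}\,M_L\;=\;\tfrac{i}{\omega}\bigl(\omega^2 M_C - M_L\bigr),
\]
where $M_C$ and $M_L$ are the Dirichlet Laplacians on $V_{int}$ of the capacitor and inductor subgraphs (with weights $C_e$ and $1/L_e$ respectively), both real symmetric PSD. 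Thus $\omega\ne 0$ is resonant iff $x:=\omega^2$ is a root of $P(x):=\det(xM_C-M_L)$.

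Next, I would apply the weighted matrix-tree theorem with edge weights $xC_e$ on capacitors and $-1/L_e$ on inductors to obtain
\[
P(x)\;=\;\sum_{k}(-1)^k b_k\, x^{|V_{int}|-k},
\qquad
b_k\;=\;\sum_{\substack{T\text{ wired sp.\ tree}\\ n_L(T)=k}}\ \prod_{e\in T\cap E_C}C_e\prod_{e\in T\cap E_L}\tfrac{1}{L_e}.
\]
Each $b_k$ is a nonnegative sum of positive terms, and $b_k>0$ iff some wired spanning tree contains exactly $k$ inductors. The basis-exchange axiom of the graphic matroid (a single-element swap between two bases changes $n_L$ by $0$ or $\pm 1$) implies that $\{k:b_k>0\}$ is a consecutive integer interval $[k_{\min},k_{\max}]$. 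The theorem's hypothesis reads $k_{\min}<k_{\max}$; in particular some $b_k>0$, so $P$ is a nonzero polynomial of degree at most $|V_{int}|$, giving finiteness. Moreover the coefficients $(-1)^k b_k$ strictly alternate in sign over $[k_{\min},k_{\max}]$, producing exactly $k_{\max}-k_{\min}\ge 1$ sign changes.

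For nonemptiness, Descartes' rule will deliver a positive real root once the roots of $P$ are known to be real. To secure reality, I observe that $P\not\equiv 0$ forces $\ker M_C\cap\ker M_L=\{0\}$, since any common kernel vector would make $\det(xM_C-M_L)$ vanish identically in $x$. Consequently $M_C+M_L$ is positive definite, and by simultaneous congruence-diagonalization of two PSD forms (via Cholesky of $M_C+M_L$ followed by orthogonal diagonalization of the resulting commuting pair summing to $I$), there exists invertible $S$ with $M_C=S\,\mathrm{diag}(\alpha_i)\,S^T$ and $M_L=S\,\mathrm{diag}(\beta_i)\,S^T$, where $\alpha_i,\beta_i\ge 0$ and $\alpha_i+\beta_i>0$ for each $i$. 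Then $P(x)=(\det S)^2\prod_i(\alpha_i x-\beta_i)$ has only real roots, each equal to the nonnegative ratio $\beta_i/\alpha_i$ (and present exactly when $\alpha_i>0$). Combined with the sign-change count, $P$ must have exactly $k_{\max}-k_{\min}\ge 1$ strictly positive real roots, producing at least one resonant frequency $\omega=\sqrt{x}>0$.

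The hard step is reality of the pencil eigenvalues when neither $M_C$ nor $M_L$ is positive definite, a possibility the hypothesis does not exclude (capacitor- or inductor-only components of $V_{int}$ may be disconnected from the boundary). The simultaneous-congruence-diagonalization argument handles it, but rests crucially on the triviality of $\ker M_C\cap\ker M_L$; this is the key bridge from the combinatorial hypothesis on spanning trees back to the analytic conclusion on the pencil.
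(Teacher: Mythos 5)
Your proof is correct, but it reaches the key point --- real-rootedness of the relevant polynomial --- by a genuinely different road than the paper. The paper's argument (due to Pemantle) stays entirely within the theory of real stable polynomials: the multivariate spanning-tree polynomial is stable, the substitutions $z_e\mapsto L_ez_e$, $z_e\mapsto 1/C_e$ and diagonalization preserve stability, so the resulting univariate polynomial in $z=-\omega^2$ is real-rooted with nonpositive roots, and the hypothesis on spanning trees enters only to rule out its being a monomial. You instead write the Dirichlet Laplacian as the pencil $\tfrac{i}{\omega}(\omega^2M_C-M_L)$ of two real PSD matrices and get real-rootedness of $P(x)=\det(xM_C-M_L)$ from simultaneous congruence diagonalization; the hinge of that step, $\ker M_C\cap\ker M_L=\{0\}$, you correctly deduce from $P\not\equiv 0$, which in turn comes from the matrix-tree expansion. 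You then locate a strictly positive root by combining the basis-exchange interval property of the graphic matroid with the exactness of Descartes' rule for real-rooted polynomials --- or, more directly, by comparing the degree $|V_{int}|-k_{\min}$ of $P$ with its order of vanishing $|V_{int}|-k_{\max}$ at $0$ in your factorization $P(x)=(\det S)^2\prod_i(\alpha_ix-\beta_i)$, which gives exactly $k_{\max}-k_{\min}\ge 1$ positive roots. Your route is more elementary and self-contained (no stable-polynomial machinery) and yields the sharper count of $k_{\max}-k_{\min}$ resonant values of $\omega^2$; the paper's route is shorter and illustrates the stability toolkit. One caveat shared by both arguments: each edge must carry a single element, since an edge with an inductor and a capacitor in series has admittance $i\omega C_e/(1-\omega^2L_eC_e)$, which fits neither the pencil form nor the paper's substitution; this is implicit in the theorem's hypothesis about counting inductors per spanning tree.
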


\begin{proof}
This argument is due to Robin Pemantle.
Recall that a multivariate real polynomial $p(z_1,\dots,z_n)$ is \emph{stable} if $p(z_1,\dots,z_n)\ne 0$ whenever $\Im(z_i)>0$ for each $i$.
See \cite{Wagner2011} for background on stable polynomials.
Stability is preserved under certain operations: setting some variables to be real constants; replacing
a variable $z_i$ by $c z_i$ for $c>0$; setting some variables equal to each other. 

On a finite graph $\G$ with variables $z_e$ on edges, let $Z=Z(\{z_e\})$ be the weighted sum of spanning trees;
it is a homogeneous stable polynomial in the $z_e$, see \cite[Prop. 2.4]{Borcea2008}...in fact this is one of the canonical examples of multivariate stable polynomials.
For those edges $e$ with inductors, replace each $z_e$ by $L_e z_e$ and for
those edges $e$ with capacitors, replace $z_e$ with $1/C_e$. These operations preserve stability 
(since $C_e,L_e>0$). Now set all remaining
factors of $z_e$ to a single value $z$.
This also preserves stability. The new one-variable polynomial of $z$, $\tilde Z(z),$ has only real roots, by stability 
(and the fact that it has real coefficients). Moreover these roots must be negative
since all coefficients are positive. By our hypothesis that not all spanning trees have the same number of inductors, $\tilde Z$ is not a monomial in $z$, so has at least one negative root.
Now replace $z$ with $-\omega^2$, and divide by
$(i\omega)^{n}$ where $n$ is the  degree. This modified (Laurent) polynomial $\tilde Z(-\omega^2)/(i\omega)^n$ 
is the Laplacian determinant. 
\end{proof}

\noindent{\bf Ackowledgements.} We thanks Robin Pemantle for providing the argument of Theorem \ref{resonant}. R. Kenyon
is supported by the NSF grant DMS-1713033 and the Simons Foundation award 327929.

\bibliographystyle{siam}
\bibliography{fixedq}

\end{document}